\theoremstyle{dgthm}
\newtheorem{theorem}{Theorem}
\newtheorem{lemma}{Lemma}
\theoremstyle{dgdef}
\newtheorem{definition}{Definition}
\newtheorem*{thm}{Theorem A } 
\newtheorem*{theo}{Theorem B}
\begin{document}

	\articletype{Research Article}
	\received{Month	DD, YYYY}
	\revised{Month	DD, YYYY}
  \accepted{Month	DD, YYYY}
  \journalname{De~Gruyter~Journal}
  \journalyear{YYYY}
  \journalvolume{XX}
  \journalissue{X}
  \startpage{1}
  \aop
  \DOI{10.1515/sample-YYYY-XXXX}

\title{The properties of general Fourier partial sums of functions $f \in C_L$}
\runningtitle{The properties of general Fourier partial sums}

\author*[1]{Giorgi Tutberidze}
\author[2]{Vakhtang Tsagareishvili}
\author[2]{Giorgi Cagareishvili} 
\runningauthor{F.~Author et al.}
\affil[1]{\protect\raggedright 
The University of Georgia, Institute of Mathematics, 77a Merab Kostava St, Tbilisi 0128, Georgia and Ivane Javakhishvili Tbilisi State University, Faculty of Exact and Natural Sciences, Chavchavadze str. 1, Tbilisi 0128, Georgia, e-mail: g.tutberidze@ug.edu.ge, g.tutberidze@tsu.ge}
\affil[2]{\protect\raggedright 
Ivane Javakhishvili Tbilisi State University, Department of Mathematics, Faculty of Exact and Natural Sciences, Chavchavadze str. 1, Tbilisi 0128, Georgia, e-mail:cagare@ymail.com, giorgicagareishvili7@gmail.com}

	
\abstract{In this paper, we investigate the convergence properties of Fourier partial sums associated with general orthonormal systems, focusing on functions that belong to specific differentiable function classes. While classical Fourier analysis has extensively studied trigonometric systems, our approach considers a broader class of orthonormal systems, including those adapted to weighted function spaces or arising from orthogonal polynomials. The primary objective is to analyze how the smoothness and differentiability of the function \( f \) affect the rate and nature of convergence of its Fourier partial sums. We derive estimates for the approximation error in various norms and establish sufficient conditions under which uniform or pointwise convergence occurs. In particular, we highlight how differentiability constraints on \( f \) can lead to sharper convergence results than those available for general \( L^2 \)-functions. Furthermore, we explore the impact of specific system properties, such as localization and boundedness, on the summation behavior. Several illustrative examples are provided, demonstrating the theoretical findings for commonly used orthonormal systems. Our results contribute to the deeper understanding of spectral approximations and have potential applications in numerical analysis, signal processing, and the theory of function spaces.}

\keywords{General Fourier series,  Fourier coefficients, Partial sums, Lipschitz class, Differentiable functions, Orthonormal systems, Banach space.}

\maketitle

\section{Introduction}\label{sec1}

\qquad To maintain the logical flow of our discussion and ensure the clarity of our main proofs, we have compiled all relevant notations, definitions, and preliminary concepts in Section~2.

S.~Banach famously demonstrated that good differentiability properties of a function do not necessarily imply the almost everywhere boundedness of its Fourier partial sums with respect to general orthonormal systems (ONS). In this paper, we investigate the behavior of Fourier partial sums with respect to general ONSs when the function \( f \) belongs to certain differentiable function classes. We also show that our main result is, in a specific sense, sharp---see Theorem~\ref{t3}.

Regarding recent applications developed in collaboration with the present authors, we refer the reader to the Ph.D. theses~\cite{harpal} and~\cite{tutberidze}, as well as the references therein. These works explore significant engineering and industrial applications, including topics such as Structural Health Monitoring, Artificial Intelligence, Neural Networks, Signal Processing, Operational Modal Analysis (OMA), and Damage Detection in Bridges, with particular attention given to the remarkable H\aa{}logaland Bridge in Narvik.

An interesting aspect of our investigation is the distinction, noted in the results of Menchov and Banach, between the convergence behavior of general orthonormal series and that of general Fourier series for functions in certain differentiability classes. In the case of general orthonormal series, the convergence primarily depends on the coefficients. However, for Fourier series with respect to a general ONS, the mere fact that \( f \neq 0 \) belongs to a differentiable class does not ensure convergence. Consequently, for the Fourier series of functions in a given differentiability class (e.g., \( \text{Lip}1 \)) to be convergent or for their partial sums to remain bounded, one must impose additional conditions on the elements \( \varphi_n \) of the ONS \( (\varphi_n) \).

A further motivation for this study is to utilize our main results to derive new findings concerning the convergence and divergence of general Fourier series. Related results can be found in~\cite{Edvards, GogoladzeTsagareishvili, GogoladzeTsagareishvili_5, GogoladzeTsagareishvili_6, KolmogorovFomin, PTT, Olevskii, Rademacher, Tandori, Cagareishvili, TsaTutCa, tsatut3, tut1}. See also the book~\cite{PTWbook} and the monographs~\cite{Alexits, KaczmarzSteinhaus, KashinSaakyan}.

The main results---Theorems~\ref{t2} and~\ref{t3}---are stated and proven in Section~3.

\section{Auxiliary definitions and results} \label{sec2}
By $Lip1$ we denote the class of functions $f$ from  $C(0,1)$, for which
$$\|f(x)-f(y)\|=O(h), \text{ when } \max |x-y| \leq h.$$
Let $C_L$ be the class of functions $f$ if 
$\frac{d}{dx}f \in Lip1.$

Suppose that $f\in L_2$ be an arbitrary function and $(\varphi_n)$  be ONS on $[0,1]$. Then the numbers
$$
C_n(f)=\int_0^1 f(x)  \varphi_n (x)\,dx, \quad n=1,2,\dots\,,
$$
are the Fourier coefficients of function $f$ with respect to the system $(\varphi_n)$ and
\begin{equation}\label{eq2}
	\sum_{n=1}^\infty   C_n (f)\varphi_n (x)
\end{equation}
is the Fourier series of this function $f$.

The general partial sum $S_n(x,f)$ of the series \eqref{eq2} is defined as follows
\begin{eqnarray}\label{eq2.2}
	S_n (x,f)=\sum_{k=1}^n  C_k (f)\varphi_k (x) .
\end{eqnarray}
Next, we can define $M_n(x)$ as follows $\left(x \in [0,1]\right)$ 

\begin{equation}\label{eq3}
	M_n(x)=\frac{1}{n} \sum_{i=1}^{n-1}\left|\int_{0}^{\frac{i}{n}}Q_n(u,x)du\right|,
\end{equation}
where
\begin{equation}\label{eq4}
	Q_n (u,x)=\sum_{k=1}^n  g_k (u)\varphi_k (x) 
\end{equation}
and
\begin{eqnarray}\label{eq1.2}
	g_k(u)=\int_{0}^{u} \varphi_k(t)dt. 
\end{eqnarray}

\begin{lemma} (see \cite{Cagareishvili})\label{l1}
	Let $(\varphi_n)$ be an arbitrary ONS on $[0,1]$. Then
	$$
	\frac{1}{n^2}  \sum_{k=1}^n  \varphi_k^2 (x)=O(1)n^{-\frac{1}{2}}  \;\;\text{a.e. on $[0,1]$}.
	$$
\end{lemma}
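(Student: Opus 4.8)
The plan is to reduce the pointwise a.e.\ estimate to one summability argument by exploiting two features of the partial sums $S_n(x):=\sum_{k=1}^n\varphi_k^2(x)$. First, $S_n(x)$ is nonnegative and nondecreasing in $n$. Second, by orthonormality its integral is exactly $n$, since $\int_0^1 S_n(x)\,dx=\sum_{k=1}^n\int_0^1\varphi_k^2(x)\,dx=n$. After rewriting, the assertion $\frac{1}{n^2}S_n(x)=O(1)n^{-1/2}$ is equivalent to $S_n(x)=O(n^{3/2})$ a.e., so this is what I would establish.

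First I would pass to a thinned subsequence $n_j$ chosen so that \emph{simultaneously} $\sum_j n_j^{-1/2}<\infty$ and $\sup_j n_{j+1}/n_j<\infty$; the concrete choice $n_j=j^4$ works, as $\sum_j j^{-2}<\infty$ and $(j+1)^4/j^4\to 1$. Along this subsequence I would form the nonnegative series $\sum_j n_j^{-3/2}S_{n_j}(x)$ and integrate term by term. By the monotone convergence theorem (Beppo Levi), $\int_0^1\sum_j n_j^{-3/2}S_{n_j}(x)\,dx=\sum_j n_j^{-3/2}\cdot n_j=\sum_j n_j^{-1/2}<\infty$, so the series converges for a.e.\ $x$. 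Its general term therefore tends to $0$; in particular $n_j^{-3/2}S_{n_j}(x)\le C(x)$ a.e., i.e.\ $S_{n_j}(x)=O(n_j^{3/2})$ a.e.

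Next I would fill in the gaps using monotonicity. For an arbitrary $n$, pick $j$ with $n_j\le n<n_{j+1}$; then $S_n(x)\le S_{n_{j+1}}(x)\le C(x)\,n_{j+1}^{3/2}$, while $n^{3/2}\ge n_j^{3/2}$, so $S_n(x)/n^{3/2}\le C(x)\,(n_{j+1}/n_j)^{3/2}\le C'(x)$ by the bounded-ratio condition. This yields $S_n(x)=O(n^{3/2})$ for every $n$ a.e., which is precisely the claim.

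The main obstacle—and the reason the naive route fails—is the sharpness of the exponent $3/2$: summing $n^{-3/2}S_n(x)$ over \emph{all} $n$ gives $\sum_n n^{-1/2}=\infty$, so the full series is not integrable and one cannot conclude directly. The decisive point is that monotonicity of $S_n$ lets one dominate every index $n$ by the nearest subsequence term, so the much weaker requirement $\sum_j n_j^{-1/2}<\infty$ suffices; checking that a single subsequence meets both the summability and the bounded-ratio conditions at once is the only genuine subtlety.
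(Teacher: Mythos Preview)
The paper does not give its own proof of this lemma; it simply cites \cite{Cagareishvili} and states the result, so there is nothing to compare against directly.

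Your argument is correct and is in fact the standard way to prove such estimates. The two ingredients---$\int_0^1 S_n=n$ from orthonormality and monotonicity of $n\mapsto S_n(x)$---are exactly what is needed, and the Beppo Levi step along the subsequence $n_j=j^4$ together with the bounded-ratio interpolation is carried out cleanly. One minor remark: the constant $C(x)$ you produce depends on $x$, which is consistent with the intended reading of ``$O(1)$ a.e.'' in the lemma (the implied constant may depend on the point); this is also how the paper uses the result, since immediately after the lemma it only needs that the exceptional set $F=\{x:\limsup_n n^{-2}\sum_{k\le n}\varphi_k^2(x)=\infty\}$ has measure zero.
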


\textbf{Notation.} Let $G=[0,1] \setminus F,$ where 
\begin{eqnarray*}
	F = \left \{x\in[0,1] \left| \lim\limits_{n\to \infty}\frac{1}{n^2} \sum_{k=1}^{n} \varphi_k^2 (x) = \infty\right. \right \}.
\end{eqnarray*}
It is easy to show that
$$\left|F\right|=0 \text{  and  } \left|G\right|=1.$$

\begin{lemma}(see \cite{Cagareishvili})\label{l2}
	Suppose that
	\begin{equation}\label{eq5}
		B_n (u,x)=\sum_{k=1}^n  \varphi_k (u) \varphi_k (x).
	\end{equation}
	If for some $x\in [0,1]$
	\begin{equation}\label{eq6}
		\limsup_{n\to \infty} \bigg\vert \int_0^1  B_n (u,x)\,du\bigg\vert =+\infty,
	\end{equation}
	then for the function $q(u)=1$ $(u\in [0,1])$
	\begin{eqnarray*}
		\limsup_{n\to \infty} \vert S_n (x,q)\vert =+\infty ,
	\end{eqnarray*}
	where
	\begin{eqnarray*}
		S_n(x,q)=\sum_{k=1}^{n} C_k(q)\varphi_{k}(x) \text{ and } C_k(q)=\int_{0}^{1}q(u)\varphi_{k}(u)du = \int_{0}^{1} \varphi_{n}(u)du.
	\end{eqnarray*}

\end{lemma}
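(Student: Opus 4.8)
The plan is to show that the two sequences whose limits superior appear in the hypothesis \eqref{eq6} and in the conclusion are in fact term-by-term identical, so that the claim reduces to a single observation once this identity is recognized. Fix an $x \in [0,1]$ for which \eqref{eq6} holds. Since $q(u)=1$ on $[0,1]$, its Fourier coefficients with respect to $(\varphi_n)$ are $C_k(q)=\int_0^1 \varphi_k(u)\,du$, and the corresponding partial sum is
$$S_n(x,q)=\sum_{k=1}^n C_k(q)\,\varphi_k(x)=\sum_{k=1}^n \varphi_k(x)\int_0^1 \varphi_k(u)\,du.$$

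First I would compute $\int_0^1 B_n(u,x)\,du$ directly from the definition \eqref{eq5}. Because $B_n(u,x)$ is a \emph{finite} sum, interchanging summation and integration is immediate and needs no justification beyond the integrability of each $\varphi_k$, giving
$$\int_0^1 B_n(u,x)\,du=\int_0^1 \sum_{k=1}^n \varphi_k(u)\varphi_k(x)\,du=\sum_{k=1}^n \varphi_k(x)\int_0^1 \varphi_k(u)\,du.$$

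Comparing the two displays, I obtain the exact identity $\int_0^1 B_n(u,x)\,du=S_n(x,q)$ for every $n$. Taking absolute values and passing to the limit superior then yields $\limsup_{n\to\infty}\bigl|\int_0^1 B_n(u,x)\,du\bigr|=\limsup_{n\to\infty}|S_n(x,q)|$, so the hypothesis \eqref{eq6} forces the right-hand side to equal $+\infty$, which is precisely the conclusion. There is no genuine analytic obstacle here: the entire content of the lemma is the recognition that the averaged kernel $\int_0^1 B_n(u,x)\,du$ coincides with the Fourier partial sum $S_n(x,q)$ of the constant function $q\equiv 1$ at the point $x$. The only point worth flagging is the evident typographical slip in the statement, where the final integral should read $\int_0^1 \varphi_k(u)\,du$ (the summation index) rather than $\int_0^1 \varphi_n(u)\,du$.
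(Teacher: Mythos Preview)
Your proof is correct and follows essentially the same approach as the paper: both establish the identity $\int_0^1 B_n(u,x)\,du = S_n(x,q)$ by interchanging the finite sum with the integral and then conclude directly from \eqref{eq6}. Your observation about the typographical slip ($\varphi_n$ versus $\varphi_k$) is also accurate.
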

\begin{proof}
	Indeed, as
	$$C_n(q) = \int_{0}^{1}\varphi_{n}(u)du $$
	we get
	\begin{eqnarray*}
		\int_{0}^{1} B_n(u,x)du = \sum_{k=1}^{n}\int_{0}^{1}\varphi_{k}(u)du \varphi_{k}(x)=\sum_{k=1}^{n} C_k(q)  \varphi_{k}(x) = S_n(x,q).
	\end{eqnarray*}
	From the last equality and \eqref{eq6} we conclude that Lemma \ref{l2} is valid.
	
\end{proof}

\begin{lemma} (see \cite{Cagareishvili}) \label{l3}
	For any $i$ $(i=1,\dots,n)$ and $x\in [0,1]$ 
	$$
	\int_{\frac{i-1}{n}}^{\frac{i}{n}} \vert Q_n(u,x)\vert \,du\leq \frac{1}{n} \bigg(\sum_{k=1}^n \varphi_k^2(x)\bigg)^{\frac{1}{2}}
	$$
	holds (see \eqref{eq4}).
\end{lemma}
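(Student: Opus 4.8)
The plan is to estimate the integrand $|Q_n(u,x)|$ pointwise, splitting off the dependence on $x$ from the dependence on $u$, and only then integrate. First I would apply the Cauchy--Schwarz inequality to the sum defining $Q_n$ in \eqref{eq4}, regarding $(g_k(u))_{k=1}^n$ and $(\varphi_k(x))_{k=1}^n$ as two vectors:
$$
\vert Q_n(u,x)\vert = \Bigl\vert \sum_{k=1}^n g_k(u)\varphi_k(x)\Bigr\vert \leq \Bigl(\sum_{k=1}^n g_k^2(u)\Bigr)^{1/2}\Bigl(\sum_{k=1}^n \varphi_k^2(x)\Bigr)^{1/2}.
$$
This already isolates the factor $\bigl(\sum_{k=1}^n \varphi_k^2(x)\bigr)^{1/2}$ that appears on the right-hand side of the claimed bound, so the task reduces to showing that the integral over $[\tfrac{i-1}{n},\tfrac{i}{n}]$ of the remaining $u$-dependent factor is at most $\tfrac1n$.

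The key observation is that, by \eqref{eq1.2}, each $g_k(u)=\int_0^u \varphi_k(t)\,dt = \int_0^1 \mathbf{1}_{[0,u]}(t)\varphi_k(t)\,dt$ is exactly the $k$-th Fourier coefficient of the characteristic function $\mathbf{1}_{[0,u]}$ with respect to the ONS $(\varphi_n)$. Bessel's inequality then yields
$$
\sum_{k=1}^n g_k^2(u) \leq \int_0^1 \mathbf{1}_{[0,u]}^2(t)\,dt = u ,
$$
so that $\bigl(\sum_{k=1}^n g_k^2(u)\bigr)^{1/2}\leq \sqrt{u}$ for every $u\in[0,1]$.

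Finally I would combine the two estimates and integrate. Since $i\le n$, we have $u\le \tfrac{i}{n}\le 1$ and hence $\sqrt{u}\le 1$ throughout the interval $[\tfrac{i-1}{n},\tfrac{i}{n}]$, whose length is $\tfrac1n$; therefore
$$
\int_{\frac{i-1}{n}}^{\frac{i}{n}} \vert Q_n(u,x)\vert\,du \leq \Bigl(\sum_{k=1}^n \varphi_k^2(x)\Bigr)^{1/2}\int_{\frac{i-1}{n}}^{\frac{i}{n}}\sqrt{u}\,du \leq \frac{1}{n}\Bigl(\sum_{k=1}^n \varphi_k^2(x)\Bigr)^{1/2},
$$
which is the asserted inequality. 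I do not anticipate any serious obstacle: the only genuinely substantive step is recognizing $g_k(u)$ as Fourier coefficients so that Bessel's inequality becomes available, and the remaining ingredient is the elementary but essential bound $\sqrt{u}\le 1$ on the range of integration, which relies precisely on the hypothesis $i\le n$.
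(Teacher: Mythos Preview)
Your proof is correct and uses the same two ingredients as the paper---Bessel's inequality for $\sum_k g_k^2(u)\le 1$ and Cauchy--Schwarz on the sum defining $Q_n$---only in a slightly different order: the paper first applies H\"older to pass from $\int |Q_n|$ to $\frac{1}{\sqrt n}\bigl(\int Q_n^2\bigr)^{1/2}$ and then bounds $Q_n^2$ via Cauchy--Schwarz, whereas you bound $|Q_n(u,x)|$ pointwise and integrate directly. Your route is marginally more economical (it dispenses with the extra H\"older step), but the substance is the same.
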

\begin{proof}
	By using \eqref{eq1.2} and Bessel inequality,
	\begin{eqnarray}
		\label{eq2.7}
		\sum_{k=1}^{\infty} g_k^2(u) = 	\sum_{k=1}^{\infty} \left(\int_{0}^{u} \varphi_k(t)dt\right)^2 \leq \int_{0}^u dt \leq 1	\end{eqnarray}
	where $u \in [0,1].$ Using \eqref{eq2.7}, the Cauchy and H\"older inequalities, we get $\left(i=1, 2, \dots , n\right)$ 
	\begin{eqnarray*}
		\int_{\frac{i-1}{n}}^{\frac{i}{n}} \vert Q_n(u,x)\vert \,du &\leq& \frac{1}{\sqrt{n}} \left(	\int_{\frac{i-1}{n}}^{\frac{i}{n}} Q_n^2(u,x) \,du\right)^{\frac{1}{2}} = \frac{1}{\sqrt{n}} \left(	\int_{\frac{i-1}{n}}^{\frac{i}{n}} \left(\sum_{k=1}^{n}g_k(u)\varphi_{k}(x)\right)^2 \,du\right)^{\frac{1}{2}} \\
		&\leq&	\frac{1}{\sqrt{n}} \left(	\int_{\frac{i-1}{n}}^{\frac{i}{n}} \sum\limits_{k=1}^{n}g_k^2(u)du \sum_{k=1}^n\varphi_{k}^2(x) \right)^{\frac{1}{2}} \leq \frac{1}{n}\left(\sum_{k=1}^n\varphi_{k}^2(x)\right)^{\frac{1}{2}}.
	\end{eqnarray*}
	The Lemma \ref{l3} is proved.
	
\end{proof}

\begin{lemma}(see \cite{Cagareishvili}) \label{l4}
	Let $(\varphi_n )$ be an ONS on $[0,1]$ and $f\in C_L$, then
	\begin{equation}\label{eq8}
		S_n (x,f)=f(1) \int_0^1  B_n (u,x)\,du-\int_0^1  f' (x) Q_n (u,x)\,du  .
	\end{equation}
\end{lemma}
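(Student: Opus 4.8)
The plan is to establish \eqref{eq8} by a single integration by parts performed inside each Fourier coefficient, followed by an interchange of a finite sum with an integral. First I would write $S_n(x,f)=\sum_{k=1}^n C_k(f)\varphi_k(x)$ with $C_k(f)=\int_0^1 f(u)\varphi_k(u)\,du$, and observe that by \eqref{eq1.2} the function $g_k$ is an antiderivative of $\varphi_k$ with $g_k(0)=0$ and $g_k(1)=\int_0^1\varphi_k(u)\,du$. Since $f\in C_L$ its derivative $f'$ exists and is Lipschitz, hence $f\in C^1[0,1]$ is absolutely continuous, so integration by parts is legitimate and gives
$$C_k(f)=\int_0^1 f(u)\,dg_k(u)=f(1)g_k(1)-\int_0^1 f'(u)\,g_k(u)\,du,$$
the boundary contribution at $0$ vanishing because $g_k(0)=0$.

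Next I would multiply each such identity by $\varphi_k(x)$ and sum over $k=1,\dots,n$. Because the sum is finite it passes freely through the constant factor $f(1)$ and through the integral in $u$. The boundary terms assemble into $f(1)\sum_{k=1}^n g_k(1)\varphi_k(x)$, which by the definition \eqref{eq5} of $B_n$ equals $f(1)\int_0^1 B_n(u,x)\,du$, since $\int_0^1 B_n(u,x)\,du=\sum_{k=1}^n\big(\int_0^1\varphi_k(u)\,du\big)\varphi_k(x)=\sum_{k=1}^n g_k(1)\varphi_k(x)$. For the remaining terms, interchanging the finite sum with the integral and invoking the definition \eqref{eq4} of $Q_n$ yields
$$\sum_{k=1}^n \Big(\int_0^1 f'(u)\,g_k(u)\,du\Big)\varphi_k(x)=\int_0^1 f'(u)\Big(\sum_{k=1}^n g_k(u)\varphi_k(x)\Big)\,du=\int_0^1 f'(u)\,Q_n(u,x)\,du.$$
Combining the two pieces gives exactly \eqref{eq8}.

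I expect no substantive obstacle: the argument is a routine integration by parts combined with the manipulation of a finite sum, so every interchange is elementary and no convergence issues arise. The only point requiring care is the validity of the integration by parts, which is precisely what the hypothesis $f\in C_L$ supplies—it guarantees $f\in C^1[0,1]$ with $f'$ Lipschitz, so $f$ is absolutely continuous and the fundamental-theorem form of integration by parts applies with no further assumptions. (I note that the derivative in the last integral of \eqref{eq8} is to be read as evaluated at the integration variable $u$, i.e. the integrand is $f'(u)\,Q_n(u,x)$.)
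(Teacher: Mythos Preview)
Your proposal is correct and follows essentially the same route as the paper: integrate each Fourier coefficient by parts using $g_k$ as an antiderivative of $\varphi_k$, then sum over $k$ and identify the resulting expressions with $\int_0^1 B_n(u,x)\,du$ and $\int_0^1 f'(u)Q_n(u,x)\,du$. Your version simply makes explicit the justifications (absolute continuity of $f$, finiteness of the sum) that the paper leaves implicit, and you are right that the $f'(x)$ in the statement should be read as $f'(u)$.
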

\begin{proof}
	Integrating by parts, we obtain
	$$
	C_n (f)=\int_0^1  f(u) \varphi_k (u)\,du =f(1) \int_0^1  \varphi_k (u)\,du -\int_0^1  f' (u) g_n (u)\,du .
	$$
	Therefore,
	\begin{align}
		S_n (x,f) & =\sum_{k=1}^n  C_k (f)\varphi_k (x) \nonumber \\
		& =f(1) \int_0^1 \sum_{k=1}^n  \varphi_k (u) \varphi_k (x)\,du -\int_0^1  f'(u)\sum_{k=1}^n  g_k (u)\varphi_k (x)   \,du \nonumber \\
		& =f(1) \int_0^1  B_n (u,x)\,du-\int_0^1  f' (u) Q_n (u,x)\,du   . \label{eq9}
	\end{align}
	From \eqref{eq9} we derive \eqref{eq8}.
	
	The Lemma \ref{l4} is proved.
	
\end{proof}

\begin{definition}\label{d2}
	Let $E(\varphi )$ be a set of any functions $f$ with
	\begin{equation}\label{eq10}
		\vert S_n (x,f)\vert =O(1)
	\end{equation}
	at the point $x\in [0,1]$.
	\\
	
\end{definition}

\begin{lemma} \label{l5}
	Suppose that $q\in E(\varphi),$ $x\in G$ $(q(u)=1, \ u\in [0,1])$ and
	\begin{eqnarray*}
		Q_n(u,x) = \sum_{k=1}^{n} g_k(u) \varphi_k(x).
	\end{eqnarray*}
	If for $x \in [0,1]$
	\begin{eqnarray}
		\label{eq2.10}
		\limsup_{n \to \infty} \left|\int_{0}^{1} Q_n(u,x)du\right| = + \infty,
	\end{eqnarray}
	then for the function $p(u)=u$ $(u \in[0,1])$
	\begin{eqnarray*}
		\limsup_{n \to \infty} \left|S_n(x,p)\right| = + \infty.
	\end{eqnarray*}
\end{lemma}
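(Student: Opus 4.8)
The plan is to express $S_n(x,p)$ as a difference of two quantities whose behaviour is pinned down by the two hypotheses: one term will be bounded because $q\in E(\varphi)$, and the other will be unbounded by \eqref{eq2.10}. First I would check that $p(u)=u$ belongs to the class $C_L$, so that Lemma~\ref{l4} applies to it: indeed $p'(u)\equiv 1$ is constant and hence lies in $Lip1$.

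Next I would invoke Lemma~\ref{l4} with $f=p$. Since $p(1)=1$ and $p'(u)\equiv 1$, the representation \eqref{eq8} collapses to
\[
S_n(x,p)=\int_0^1 B_n(u,x)\,du-\int_0^1 Q_n(u,x)\,du .
\]
The first integral I would identify, exactly as in the proof of Lemma~\ref{l2}, with the partial sum $S_n(x,q)$ for $q\equiv 1$: expanding $B_n$ and using $C_k(q)=\int_0^1\varphi_k(u)\,du$ gives $\int_0^1 B_n(u,x)\,du=\sum_{k=1}^n C_k(q)\varphi_k(x)=S_n(x,q)$. Hence
\[
S_n(x,p)=S_n(x,q)-\int_0^1 Q_n(u,x)\,du .
\]

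Finally, the hypothesis $q\in E(\varphi)$ means, by Definition~\ref{d2}, that $|S_n(x,q)|=O(1)$, so there is a constant $M$ with $|S_n(x,q)|\le M$ for all $n$. The (reverse) triangle inequality then yields
\[
|S_n(x,p)|\ge \left|\int_0^1 Q_n(u,x)\,du\right|-|S_n(x,q)|\ge \left|\int_0^1 Q_n(u,x)\,du\right|-M ,
\]
and taking $\limsup_{n\to\infty}$ together with \eqref{eq2.10} forces $\limsup_{n\to\infty}|S_n(x,p)|=+\infty$, as claimed. The argument is essentially algebraic once the correct decomposition is in place, so I do not expect a genuine obstacle; the only points requiring care are verifying that $p(u)=u\in C_L$ so that Lemma~\ref{l4} is legitimately applicable, and recognizing that $\int_0^1 B_n(u,x)\,du$ is precisely the bounded sequence $S_n(x,q)$. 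The hypothesis $x\in G$ is not actually used in this step itself—it secures the a.e.\ finiteness from Lemma~\ref{l1} that is needed in the downstream applications—so it plays no role in the estimate above.
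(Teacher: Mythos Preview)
Your proof is correct and follows essentially the same route as the paper: apply Lemma~\ref{l4} (equivalently, equation~\eqref{eq9}) with $f=p$ to obtain $S_n(x,p)=\int_0^1 B_n(u,x)\,du-\int_0^1 Q_n(u,x)\,du$, use $q\in E(\varphi)$ to bound the first integral, and conclude from \eqref{eq2.10}. Your write-up is in fact slightly more careful than the paper's, explicitly verifying $p\in C_L$ and spelling out the reverse triangle inequality; your remark that the hypothesis $x\in G$ is not actually needed for this particular argument is also accurate.
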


\begin{proof}
	In equation \eqref{eq9} we suppose that $f = p = u$ $(u \in  [0, 1]),$ we receive
	\begin{eqnarray}
		S_n (x,p) = \int_0^1  B_n (u,x)\,du-\int_0^1 Q_n (u,x)\,du   . \label{eq2.11}
	\end{eqnarray}
	According to the condition of this lemma (see Lemma\ref{l2})
	\begin{eqnarray*}
		\left|\int_0^1  B_n (u,x)\,du\right| = O(1).
	\end{eqnarray*}
	Consequently, using \eqref{eq2.10} and \eqref{eq2.11} we get
	\begin{eqnarray*}
		\limsup_{n \to \infty} \left|S_n(x,p)\right| = + \infty.
	\end{eqnarray*}
	Lemma \ref{l5} is proved.	
	
\end{proof}

\begin{thm}[S. Banach \cite{Banach}] \label{tA}
	For any $f\in L_2$ $(f\not\sim 0)$ there exists an ONS $(\varphi_n )$ such that
	\begin{eqnarray*}
		\limsup_{n\to \infty} \vert   S_n (x,f)\vert =+\infty \ \ \ \text{a.e. on } [0,1].
	\end{eqnarray*}	
	
\end{thm}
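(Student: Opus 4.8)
The plan is to build, for the given $f$, a \emph{bad} orthonormal system by a block construction of Menshov type, adapted so that the prescribed coefficients are exactly the Fourier coefficients $C_n(f)$. First I would normalize $\|f\|_{L_2}=1$, which costs nothing since $S_n(x,\lambda f)=\lambda S_n(x,f)$. The skeleton is then a Borel--Cantelli scheme: decompose the index set into consecutive blocks $B_1,B_2,\dots$ and arrange that in block $B_j$ some partial sum of the Fourier series of $f$ exceeds a level $\lambda_j\to\infty$ on a measurable set $E_j$ with $|[0,1]\setminus E_j|\le\delta_j$ and $\sum_j\delta_j<\infty$. Once this is done, for almost every $x$ (namely every $x$ lying in all but finitely many $E_j$, a set of full measure by Borel--Cantelli) there are infinitely many $n$ with $|S_n(x,f)|\ge\lambda_j$, whence $\limsup_{n\to\infty}|S_n(x,f)|=+\infty$ a.e.

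To realise the blocks I would split the energy of $f$ across mutually orthogonal finite-dimensional subspaces. Choose pairwise orthogonal subspaces $V_1,V_2,\dots$ of $L_2[0,1]$ with $\dim V_j=m_j$ and write $f=\sum_j f_j$, where $f_j\in V_j$ is the orthogonal projection, so that $\sum_j\|f_j\|^2=1$; the $V_j$ can be chosen so that each $f_j\neq0$. The functions $\varphi_k$ with $k\in B_j$ will form an orthonormal basis of $V_j$, so that $(\varphi_n)$ is automatically an ONS, and by Parseval inside $V_j$ the coefficients satisfy $\sum_{k\in B_j}C_k(f)^2=\|f_j\|^2$. The crux is the choice of this basis: I would invoke the finite-dimensional Menshov lemma, which produces an orthonormal basis of $V_j$ (built from functions supported on small sets, hence of large sup-norm) for which the partial sums $\sum_{k\in B_j,\,k\le K}C_k(f)\,\varphi_k(x)$ attain magnitude at least $\|f_j\|\cdot c\log m_j$ at a suitable index $K=K(x)$, for all $x$ in a set $E_j$ of measure $\ge 1-\delta_j$. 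Since the Lebesgue-constant factor $\log m_j$ is unbounded, choosing $m_j$ large enough forces $\lambda_j:=\|f_j\|\,c\log m_j\to\infty$ even though $\|f_j\|\to0$; this is exactly the point at which square-summability of the coefficients (Bessel, $\sum_k C_k(f)^2\le 1$) coexists with divergence of the partial sums.

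Finally I would concatenate the blocks into a single system $(\varphi_n)$, ordering the indices so that the block-internal peak indices $K$ occur as genuine partial sums $S_n(x,f)$, and pad with an orthonormal basis of the orthogonal complement of $\bigoplus_j V_j$ (these extra functions carry zero $f$-coefficients and hence do not disturb the peaks). The main obstacle is the finite-dimensional Menshov lemma itself: inside each $V_j$ one must produce an orthonormal basis whose associated partial-sum (Lebesgue) function is large on a set of measure close to $1$, while the total energy available in the block is only $\|f_j\|^2$. This is handled exactly as in the classical Menshov--Rademacher theory---using orthogonal matrices whose row partial sums grow logarithmically and transplanting them onto small disjoint supports so that the $\varphi_k$ are pointwise large---but reconciling this growth with the fixed energy budget and upgrading a positive-measure statement to the almost-everywhere conclusion is the delicate part of the construction.
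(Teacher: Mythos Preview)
The paper does not give a proof of Theorem~A; it is quoted as a classical result of Banach with a bare citation to \cite{Banach} and is used only as a black box (most notably to supply the divergent ONS in the proof of Theorem~\ref{theorem4}). There is therefore no paper proof to compare your sketch against.

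For what it is worth, your outline follows the standard route to this theorem (block construction, a Menshov-type divergence lemma in each block, Borel--Cantelli to pass to a.e.\ divergence), and you correctly flag the real obstacle. One point deserves sharpening: the Menshov lemma is not a statement about abstract finite-dimensional inner-product spaces but about concrete orthonormal \emph{functions} with controlled pointwise behaviour, so you cannot first fix arbitrary subspaces $V_j\subset L_2$ and afterwards hope to locate a ``bad'' basis inside them. The $V_j$ must themselves be built from Menshov-type step functions (supported on fine partitions of small subintervals, giving the large sup-norm you mention), and one must simultaneously arrange that the inner products $\langle f,\varphi_k\rangle$ line up with the distinguished coefficient vector for which Menshov's orthogonal matrix produces logarithmic growth of partial sums. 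This coupling of prescribed Fourier coefficients with prescribed pointwise blow-up is precisely where the work lies; your final paragraph acknowledges the issue without resolving it. A complete argument (see, e.g., Kashin--Saakyan \cite{KashinSaakyan}) handles it by an explicit perturbation/rotation of the Menshov block so that the projection of $f$ onto the block becomes the required vector.
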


\begin{theo}[see \cite{2}] \label{tB}
	If   $f, F\in L_2$, then
	\begin{align}
		\int_0^1  f'(u)F(x)\,dx & =n\sum_{i=1}^{n-1}  \int_{\frac{i-1}{n}}^{\frac{i}{n}} \left(f(x)-f\left(x+\frac{1}{n} \right)\right)\,dx \int_0^{\frac{i}{n}}  F(x)\,dx \nonumber \\
		& \qquad  +   n\sum_{i=1}^{n-1}  \int_{\frac{i-1}{n}}^{\frac{i}{n}} \int_{\frac{i-1}{n}}^{\frac{i}{n}} \left(f(x)-f(u)\right)\,duF(x)\,dx \nonumber \\
		& \qquad  +n\int_{1-\frac{1}{n}}^1 f(x)\,dx\int_0^1  F(x)\,dx.\label{eq11}
	\end{align}
\end{theo}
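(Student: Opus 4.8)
The plan is to regard \eqref{eq11} as an exact discrete ``integration by parts'' identity and to prove it by partitioning $[0,1]$ into the $n$ equal subintervals $\Delta_i=\left[\frac{i-1}{n},\frac{i}{n}\right]$ and writing the integral on the left-hand side as $\sum_{i=1}^{n}\int_{\Delta_i}f(x)F(x)\,dx$. On each $\Delta_i$ I would separate the $f$-factor into its mean over $\Delta_i$ and the deviation from that mean, using the elementary identity
\begin{equation*}
	f(x)-n\int_{\Delta_i}f(u)\,du=n\int_{\Delta_i}\bigl(f(x)-f(u)\bigr)\,du ,\qquad x\in\Delta_i ,
\end{equation*}
which holds simply because $|\Delta_i|=\tfrac1n$. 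Multiplying by $F(x)$ and integrating over $\Delta_i$, the deviation part produces precisely the double-integral ``oscillation'' terms on the second line of \eqref{eq11}, while the mean part contributes $n\bigl(\int_{\Delta_i}f\bigr)\bigl(\int_{\Delta_i}F\bigr)$.

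It then remains to show that the collected mean contributions $n\sum_{i}\bigl(\int_{\Delta_i}f\bigr)\bigl(\int_{\Delta_i}F\bigr)$ assemble into the first and third lines of \eqref{eq11}. For this I would write $\int_{\Delta_i}F=\int_0^{i/n}F-\int_0^{(i-1)/n}F$ and apply Abel's summation-by-parts with partial sums $\int_0^{i/n}F(x)\,dx$. The increment of the $f$-factor generated by the transform is $\int_{\Delta_i}f(x)\,dx-\int_{\Delta_{i+1}}f(x)\,dx$, and the change of variables $y=x+\tfrac1n$ rewrites the second integral as $\int_{\Delta_i}f\!\left(x+\tfrac1n\right)dx$; hence this increment equals $\int_{\Delta_i}\bigl(f(x)-f(x+\tfrac1n)\bigr)dx$, which is exactly the first line of \eqref{eq11}. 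The single endpoint left uncancelled by the transform is the last interval $\Delta_n$, and its contribution is $n\bigl(\int_{1-1/n}^1 f\bigr)\bigl(\int_0^1 F\bigr)$, i.e.\ the third line.

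The step I expect to demand the most care is this boundary bookkeeping. The forward difference $f(x)-f(x+\tfrac1n)$ is meaningful only for $x\in\Delta_i$ with $i\le n-1$, since otherwise $x+\tfrac1n$ leaves $[0,1]$; this is precisely why the first sum must terminate at $n-1$ and why the last interval has to be extracted as a separate boundary term. The delicate point is to align the index shift in the summation by parts with the telescoping of the partial sums $\int_0^{i/n}F$ so that no spurious endpoint term survives; everything else reduces to linearity of the integral and the single change of variables noted above. Since each step is an exact algebraic identity, no regularity of $f$ beyond $f,F\in L_2$ is used, in agreement with the hypotheses.
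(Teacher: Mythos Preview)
The paper does not prove Theorem~B; it is quoted from \cite{2} as an auxiliary result, so there is nothing to compare your argument against on the paper's side.

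Your argument is correct and is essentially the canonical derivation of such a discrete integration-by-parts identity. Two small remarks. First, the left-hand side as printed, $\int_0^1 f'(u)F(x)\,dx$, is a misprint for $\int_0^1 f(x)F(x)\,dx$; you silently (and correctly) read it this way when you wrote $\sum_i\int_{\Delta_i} fF$. Second, your decomposition naturally produces the oscillation sum on the second line over all $i=1,\dots,n$, not $i=1,\dots,n-1$ as printed in \eqref{eq11}; this is again a misprint in the statement, and indeed when the paper applies \eqref{eq11} in the proof of Theorem~\ref{t2} the corresponding term $I_2$ is summed over $i=1,\dots,n$. With those two typos in the statement corrected, your Abel-summation computation reproduces the identity exactly, including the bookkeeping you flagged: the forward difference forces the first sum to stop at $i=n-1$, and the uncancelled Abel boundary term at $i=n$ is precisely the third line.
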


\section{The main problems} \label{sec3}

From Theorem A it follows that even for function $g(x)=1$ $(x\in [0,1])$ there exists an ONS $(\varphi_n)$ such that
$$
\limsup_{n\to \infty} \vert S_n (x,g)\vert =+\infty
$$
a.e. on $[0,1]$.

\begin{theorem} \label{t2}
	Let $\left(\varphi_n\right)$ be an ONS on $[0, 1]$ and $p, q \in E\left(\varphi\right),$ $x \in G,$ where $p(u)=u, q(u)= 1,  u \in \left[0,1\right] .$
	If for $x \in G$
	$$M_n (x)=O(1),$$	
	then for any $f \in C_L,$
	$$S_n(x, f ) = O(1).$$
	
\end{theorem}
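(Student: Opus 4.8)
The plan is to start from the representation supplied by Lemma~\ref{l4}. By \eqref{eq8},
\[
S_n(x,f)=f(1)\int_0^1 B_n(u,x)\,du-\int_0^1 f'(u)Q_n(u,x)\,du,
\]
so it suffices to prove that each term on the right is $O(1)$. The first term is essentially free: as in the proof of Lemma~\ref{l2} one has $\int_0^1 B_n(u,x)\,du=S_n(x,q)$ with $q\equiv1$, and since $q\in E(\varphi)$ at the point $x$ this is $O(1)$; multiplying by the constant $f(1)$ preserves the bound. Everything thus reduces to estimating $\int_0^1 f'(u)Q_n(u,x)\,du$.

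For this integral I would apply Theorem~\ref{tB}, taking the function $f$ there to be our $f\in C_L$ and the function $F$ there to be $Q_n(\cdot,x)$ for the fixed $x$; both lie in $L_2$, the latter being a finite combination of the functions $g_k$, which are bounded by \eqref{eq2.7}. This expands $\int_0^1 f'(u)Q_n(u,x)\,du$ into three pieces, to be treated in turn. In the first piece the increments $f(u)-f(u+\frac1n)$ are $O(n^{-1})$ because $f'$ is bounded (it is Lipschitz on $[0,1]$), whence that piece is dominated by $\frac{C}{n}\sum_{i=1}^{n-1}\left|\int_0^{i/n}Q_n(v,x)\,dv\right|=C\,M_n(x)$; this is exactly where the hypothesis $M_n(x)=O(1)$ is used, and it is the heart of the argument. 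In the second piece the inner average $\int_{(i-1)/n}^{i/n}(f(u)-f(v))\,dv$ is $O(n^{-2})$ by the Lipschitz property of $f$, so after bounding $\int_{(i-1)/n}^{i/n}|Q_n(u,x)|\,du$ via Lemma~\ref{l3} this piece does not exceed a constant times $\frac1n\left(\sum_{k=1}^n\varphi_k^2(x)\right)^{1/2}$, which tends to $0$ for $x\in G$ by Lemma~\ref{l1}. In the third piece the factor $n\int_{1-1/n}^1 f(u)\,du$ is bounded by $\|f\|_\infty$, while $\int_0^1 Q_n(v,x)\,dv=S_n(x,q)-S_n(x,p)=O(1)$ because both $p(u)=u$ and $q(u)=1$ belong to $E(\varphi)$ --- this is exactly the identity \eqref{eq2.11} underlying Lemma~\ref{l5}, obtained by integrating each $g_k$.

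Collecting these bounds gives $\int_0^1 f'(u)Q_n(u,x)\,du=O(1)$, and together with the estimate of the first term this yields $S_n(x,f)=O(1)$. I expect the main obstacle, and the decisive observation, to be the alignment of the decomposition in Theorem~\ref{tB} with the definition \eqref{eq3} of $M_n(x)$: one must recognize that the leading piece of that decomposition equals, up to the factor $\|f'\|_\infty$, precisely $M_n(x)$, so that the single assumption $M_n(x)=O(1)$ is exactly what absorbs it. The rest is bookkeeping --- verifying that the Lipschitz estimates, Lemma~\ref{l3}, Lemma~\ref{l1}, and the $E(\varphi)$-membership of $p$ and $q$ combine to make the two remaining pieces $O(1)$.
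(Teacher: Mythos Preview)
Your overall strategy coincides with the paper's: reduce via Lemma~\ref{l4} to bounding $\int_0^1 f'(u)Q_n(u,x)\,du$, expand this integral by Theorem~B into three pieces, and control them respectively by $M_n(x)=O(1)$, by Lemma~\ref{l3} together with Lemma~\ref{l1}, and by the identity $\int_0^1 Q_n(u,x)\,du=S_n(x,q)-S_n(x,p)$ with $p,q\in E(\varphi)$.

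There is, however, a slip in how you feed the data into Theorem~B. The left-hand side of~\eqref{eq11} as printed carries a typo: the identity in fact expresses $\int_0^1 f(u)F(u)\,du$ (no derivative) in terms of the three sums on the right, all involving the \emph{same} $f$. Hence, to get a decomposition of $\int_0^1 f'(u)Q_n(u,x)\,du$ one must substitute $f'$ (not $f$) for Theorem~B's $f$; this is precisely what the paper does (``substitute $F(x)=Q_n(u,x)$ and $f=f'$ in~\eqref{eq11}''). The increments in $I_1$ and $I_2$ are then $f'(u)-f'(u+\tfrac1n)$ and $f'(u)-f'(v)$, and it is the hypothesis $f'\in\mathrm{Lip}\,1$ (i.e.\ $f\in C_L$) that makes them $O(1/n)$. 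With your substitution the increments involve $f$ rather than $f'$; the resulting ``identity'' is not valid (one checks it already fails for $f(u)=u$, $F\equiv1$), and your estimates would only use that $f$ is Lipschitz---a conclusion stronger than what the theorem asserts and not supported by the present $M_n$. After this correction the rest of your argument matches the paper line for line.
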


\begin{proof}
	If we substitute $F(x)=Q_n (u,x)$  and $f=f'$ in \eqref{eq11}, we obtain
	\begin{align}
		\int_0^1 f'(u)Q_n(u,x)\,du & =n \sum_{i=1}^{n-1} \int_{\frac{i-1}{n}}^{\frac{i}{n}} \left(f'(u)-f'\left(u+\frac{1}{n}\right)\right)\,du \int_0^{\frac{i}{n}}  Q_n(u,x)\,du \nonumber \\
		& \qquad +n\sum_{i=1}^{n}  \int_{\frac{i-1}{n}}^{\frac{i}{n}}  \int_{\frac{i-1}{n}}^{\frac{i}{n}} \left(f'(u)-f'(v)\right)\,dv \; Q_n (u,x)\,du  \nonumber \\ & \qquad +n\int_{1-\frac{1}{n}}^1 f'(u)\,du\; \int_0^1  Q_n (u,x)\,du \nonumber \\
		& =I_1+I_2+I_3. \label{eq13}
	\end{align}
	Due to \eqref{eq3} and the fact that $f'\in C_L$, we get $(\Delta _{in}=[\frac{i-1}{n},\frac{i}{n}])$
	\begin{align}\label{eq14}
		\vert  I_1 \vert &\leq  O\left(\frac{1}{n}\right) n \sum_{i=1}^{n-1} \int_{\Delta_{in}} \left|\int_{0}^{\frac{i}{n}}Q_n\left(u,x\right)du\right| dv \leq \frac{O(1)}{n} \sum_{i=1}^{n-1} \left|\int_{0}^{\frac{i}{n}}Q_n\left(u,x\right)du\right| \\
		& = O\left(1\right) M_n (x) = O(1). \notag
	\end{align}
	Next, since $f'\in C_L$ and $x\in G$, by using inequality  \eqref{eq2.7} from Lemma \ref{l3} and by using H\"older's and Parseval's $\left(\sum\limits_{k=1}^{n} g_k^2(u)\leq 1\right)$ inequalities, we have 
	\begin{align} \label{eq15} 
		\vert  I_2 \vert &\leq O\left(\frac{1}{n^2}\right) n \sum_{i=1}^{n} \int_{\Delta_{in}} \left|Q_n(u,x)\right|du =  O\left(1\right) \frac{1}{n}  \int_{0}^{1} \left| Q_n\left(u,x\right) \right| du \\
		&  = O(1) \frac{1}{n}  \left(\int_{0}^1 Q_n^2(u,x)du \right)^{\frac{1}{2}} = O(1) \frac{1}{n}  \left(\int_{0}^{1} \left(\sum_{k=1}^{n}g_k(u)  \varphi_k(x)\right)^2 du \right)^{\frac{1}{2}} \notag\\
		& =O(1) \frac{1}{n}  \left(\int_{0}^{1} \sum_{k=1}^{n}g_k^2(u)du \sum_{k=1}^{n} \varphi_k^2(x) \right)^{\frac{1}{2}}= O(1) \frac{1}{n}\left(\sum_{k=1}^{n} \varphi_k^2(x) \right)^{\frac{1}{2}}\notag\\
		&= O(1) \left(\frac{1}{n^2}\sum_{k=1}^{n} \varphi_k^2(x) \right)^{\frac{1}{2}} =O(1). \notag
	\end{align}
	Further, in \eqref{eq9} we suppose $f(u) = p(u) = u$ and $q(u)=1,$ $u \in [0,1].$ We receive 
	\begin{eqnarray*}
		S_n(x,p) = \int_{0}^{1}B_n(u,x)du-\int_{0}^{1}Q_n(u,x)du.
	\end{eqnarray*}
	From here as  
	\begin{eqnarray*}
		\int_{0}^{1}B_n(u,x)du=S_n(x,q).
	\end{eqnarray*}	
	we have  (see \eqref{eq5})  
	\begin{eqnarray*}
		S_n(x,p) = S_n(x,q) -\int_{0}^{1}Q_n(u,x)du.
	\end{eqnarray*}
	Consequently  if  $p, q \in E(\varphi),$ we have 
	\begin{eqnarray}\label{eq161}
		\vert  I_3 \vert &\leq& n\int_{1-\frac{1}{n}}^1  \vert  f'(u)\vert \,du \;\bigg\vert \int_0^1  Q_n (u,x)\,du\bigg\vert \leq n \ \frac{1}{n} \ \max_{i \in [0,1]} \vert  f'(u)\vert \bigg\vert \int_0^1  Q_n (u,x)\,du\bigg\vert = O(1).
	\end{eqnarray}
	Taking the evaluations of $I_1,$ $I_2,$ and $I_3$ into account in \eqref{eq13} we conclude
	\begin{equation}\label{eq17}
		\bigg\vert  \int_0^1  f' (u) Q_n (u,x)\,du\bigg\vert  = O(1).
	\end{equation}
	Finally, according to the conditions of this theorem and \eqref{eq8}, we can deduce that
	$$ S_n (x,f) =O(1).$$
	Theorem \ref{t2} is completely proved.
	
\end{proof}

\begin{theorem}\label{t3}
	Let $(\varphi_n)$ be an ONS on $[0,1]$. If for some $t\in G$,
	$$
	\limsup_{n\to \infty} M_n (t)=+\infty,
	$$
	then there exists a function $r'\in C_L$ such that
	$$
	\limsup_{n\to \infty} \vert S_n (t,r)\vert =+\infty  .
	$$
\end{theorem}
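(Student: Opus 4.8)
The plan is to recast the statement as a uniform boundedness (Banach--Steinhaus) phenomenon, producing $r$ with $r'\in Lip1$ (i.e. $r\in C_L$, the class for which Theorem~\ref{t2} is the matching positive result). Regard $C_L$ as the Banach space of $f\in C^1[0,1]$ with $f'\in Lip1$, normed by $\|f\|_{C_L}=\|f\|_\infty+\|f'\|_\infty+\sup_{x\neq y}\frac{|f'(x)-f'(y)|}{|x-y|}$; this norm is complete. For the fixed $t\in G$ the maps $T_n\colon f\mapsto S_n(t,f)=\sum_{k=1}^{n}C_k(f)\varphi_k(t)$ are bounded linear functionals on $C_L$, since $|C_k(f)|\le\|f\|_\infty$ and the sum is finite (for $t\in G$ each $\varphi_k(t)$ is finite). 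If the conclusion failed for every $r\in C_L$, then $\sup_n|S_n(t,r)|<\infty$ for all $r$, so by Banach--Steinhaus $\sup_n\|T_n\|<\infty$. Hence it suffices to prove $\|T_n\|\ge c\,M_n(t)-C$ for constants $c>0$, $C$; since $\limsup_n M_n(t)=+\infty$, this forces $\sup_n\|T_n\|=+\infty$, a contradiction, and Banach--Steinhaus simultaneously exhibits a nonempty (indeed residual) set of $r\in C_L$ with $\limsup_n|S_n(t,r)|=+\infty$.

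The heart of the matter is the lower bound $\|T_n\|\ge c\,M_n(t)-C$, which I would obtain by exhibiting a near-extremal test function. Put $\Phi_n(u)=\int_0^u Q_n(s,t)\,ds$, a $C^1$ function with $\Phi_n(i/n)=\int_0^{i/n}Q_n(s,t)\,ds$, so that $M_n(t)=\frac1n\sum_{i=1}^{n-1}|\Phi_n(i/n)|$. Define $f_n$ by $f_n''=\mathrm{sign}(\Phi_n)$ together with the two normalizations $f_n'(1)=0$ and $f_n(1)=0$, which fix the two constants of integration. Then $f_n'$ is $1$-Lipschitz, so $f_n\in C_L$ with $\|f_n\|_{C_L}=O(1)$ uniformly in $n$. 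By \eqref{eq8} and $f_n(1)=0$ we have $S_n(t,f_n)=-\int_0^1 f_n'(u)Q_n(u,t)\,du$; integrating by parts and using $\Phi_n(0)=0$, $f_n'(1)=0$, $f_n''=\mathrm{sign}(\Phi_n)$ gives $S_n(t,f_n)=\int_0^1 |\Phi_n(u)|\,du$.

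It then remains to compare $\int_0^1|\Phi_n|\,du$ with the Riemann-type sum $M_n(t)$. Writing $\Delta_{in}=[(i-1)/n,i/n]$, the oscillation of $\Phi_n$ on $\Delta_{in}$ is at most $\int_{\Delta_{in}}|Q_n(\cdot,t)|\le\frac1n\left(\sum_{k=1}^{n}\varphi_k^2(t)\right)^{1/2}$ by Lemma~\ref{l3}, and since $t\in G$ Lemma~\ref{l1} gives $\frac1n\left(\sum_{k=1}^{n}\varphi_k^2(t)\right)^{1/2}=\left(\frac1{n^2}\sum_{k=1}^{n}\varphi_k^2(t)\right)^{1/2}=o(1)$. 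Summing the resulting estimate $\int_{\Delta_{in}}|\Phi_n|=\frac1n|\Phi_n(i/n)|+O(n^{-1}o(1))$ over $i$ and discarding the single boundary term $\frac1n|\Phi_n(1)|=o(1)$ yields $\int_0^1|\Phi_n(u)|\,du=M_n(t)+o(1)$. Consequently $|S_n(t,f_n)|=M_n(t)+o(1)$ while $\|f_n\|_{C_L}=O(1)$, so $\|T_n\|\ge |S_n(t,f_n)|/\|f_n\|_{C_L}\ge c\,M_n(t)-C$, as required.

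The main obstacle I anticipate is twofold. First, the choice of extremal function: the integration-by-parts identity $S_n(t,f_n)=\int_0^1|\Phi_n|\,du$ is what converts the combinatorial quantity $M_n(t)$ into a single clean integral, and recognizing $f_n''=\mathrm{sign}(\Phi_n)$ is the key idea (a naive per-interval sign pattern for $f_n'$ suffers from cancellation once one smooths it to stay Lipschitz). Second, one must ensure the auxiliary terms do not interfere: this is handled entirely by the normalizations $f_n(1)=f_n'(1)=0$, which annihilate both the $f(1)\int_0^1 B_n$ term in \eqref{eq8} and the boundary contribution of the integration by parts, so that, unlike in Theorem~\ref{t2}, no hypothesis $p,q\in E(\varphi)$ is needed. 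The remaining step, the Riemann-sum comparison $\int_0^1|\Phi_n|=M_n(t)+o(1)$, is quantitative but routine once Lemmas~\ref{l1} and~\ref{l3} are in hand. (If an explicit $r$ is preferred over the Banach--Steinhaus existence argument, the same test functions $f_n$ can instead be assembled into a lacunary gliding hump $r=\sum_j\lambda_j f_{n_j}$ along a subsequence with $M_{n_j}(t)\to\infty$.)
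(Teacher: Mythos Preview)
Your argument is correct and in fact somewhat cleaner than the paper's. Both proofs use essentially the same test function---an antiderivative of $\operatorname{sign}\Phi_n$ where $\Phi_n(u)=\int_0^u Q_n(s,t)\,ds$---and both finish with Banach--Steinhaus. The differences are in execution. The paper works on $Lip1$: it sets $f_n(u)=\int_0^u \operatorname{sign}\Phi_n(y)\,dy$, plugs this into the discrete Abel-type decomposition coming from Theorem~B, and must then split the indices $i$ into a ``good'' set $F_n$ (where $\operatorname{sign}\Phi_n$ is constant on $[\tfrac{i}{n},\tfrac{i+1}{n}]$) and a ``bad'' set $D_n$ (where it changes), estimating the $D_n$ contribution separately; it also needs a preliminary reduction via Lemmas~\ref{l2} and~\ref{l5} to the case $\int_0^1 B_n(u,t)\,du=O(1)$, $\int_0^1 Q_n(u,t)\,du=O(1)$, and only afterwards antidifferentiates the Banach--Steinhaus witness $h\in Lip1$ to obtain $m\in C_L$. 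Your route bypasses all of this: the normalizations $f_n(1)=f_n'(1)=0$ kill both the $f(1)\int_0^1 B_n$ term in \eqref{eq8} and the boundary term of the continuous integration by parts, so no preliminary reduction is needed and a single integration by parts delivers $S_n(t,f_n)=\int_0^1|\Phi_n|$ directly; the Riemann-sum comparison with $M_n(t)$ then replaces the $D_n/F_n$ combinatorics. One small correction: for $t\in G$ the quantity $\bigl(\tfrac{1}{n^2}\sum_{k\le n}\varphi_k^2(t)\bigr)^{1/2}$ is only guaranteed to be $O(1)$ (this is exactly how the paper uses it, e.g.\ in \eqref{eq3.10}), not $o(1)$ as you wrote; but $O(1)$ is all you need to obtain $\int_0^1|\Phi_n|\ge M_n(t)-O(1)$ and hence $\sup_n\|T_n\|=\infty$.
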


\begin{proof}
	Firstly, according to Lemma \ref{l2} and Lemma \ref{l5}
	\begin{eqnarray}
		\label{eq3.4}
		\left| \int_0^1 B_n (u,t)\,du\right| =O(1) \text{ and } 
		\left| \int_0^1 Q_n (u,t)\,du\right| =O(1)
	\end{eqnarray}
	otherwise, Theorem \ref{t3} is proved.
	
	We defined the sequence of functions $(f_n)$ as follows:
	\begin{eqnarray} \label{eq22}
		f_n(u)=\int_{0}^{u} sign \int_{0}^{y} Q_n(v,t)dv dy, \ \ \ n=1,2, \dots .
	\end{eqnarray}
	In \eqref{eq161} we substitute $Q_n\left(u,x\right)=Q_n\left(u,t\right)$ and $f'(u)= f_n (u),$ then
	\begin{eqnarray}
		\label{eq23} \int_{0}^{1} f_n (u)Q_n (u, t) du
		&=& \sum_{i=1}^{n-1}\left(f_n\left(\frac{i}{n}\right)-f_n\left(\frac{i+1}{n}\right)\right)\int_{0}^{\frac{i}{n}}Q_n(u,t)du \notag   \\
		&+& \sum_{i=1}^{n}\int_{\frac{i-1}{n}}^{\frac{i}{n}}\left(f_n\left(u\right) -f_n\left(\frac{i}{n}\right)\right)Q_n(u,t) du    \\
		&+& f_n\left(1\right)\int_{0}^{1}Q_n(u,t)du=S_1 +S_2  +S_3.    \notag
	\end{eqnarray}
	By using \eqref{eq22}, Lemma \ref{l3} and  H\"older's and Cauchy's inequality, we receive (see $|I_2|$)
	\begin{eqnarray} \label{eq27}
		\left|S_2\right| &\leq& \frac{1}{n} \sum_{i=1}^{n} \int_{\frac{i-1}{n}}^{\frac{i}{n}}\left|Q_n(u,t)\right|du = \frac{1}{n}\int_{0}^{1}\left|Q_n(u,t)\right|du \\
		&\leq&  O(1) \frac{1}{n} \left(\int_{0}^{1}  \left(\sum_{k=1}^{n}g_k(u)\varphi_{k}(t)\right)^2 du\right)^{\frac{1}{2}} = O(1). \notag
	\end{eqnarray}
	
	Afterwards taking into account \eqref{eq3.4} and \eqref{eq22} we get
	\begin{eqnarray}
		\label{eq28}
		\left|S_3\right| = O(1)\left|\int_{0}^{1}Q_n(u,t)du\right| = O(1).
	\end{eqnarray}

	Let $D_n$ be a set of all $i$ $\left(i=1,2,...,n-1\right)$, for all of which, there exists a point $t\in\left[\frac{i-1}{n},\frac{i}{n}\right]$ such, that 
	\begin{equation}
		sign\int_{0}^{\frac{i}{n}}Q_n \left(u,t\right) du \ne sign\int_{0}^{t}Q_n \left(u,t\right) du. \label{2}
	\end{equation}
	
	Suppose that $i\in D_n.$ On account  continuity of  function  $\int_{0}^{t}Q_n \left(u,x\right) du$ on $[0, 1]$ for some $t_{i_n}\in\left[\frac{i}{n},\frac{i+1}{n}\right]$ we have
	\begin{equation*}
		\int_{0}^{t_{i_n}}Q_n \left(u,t\right) du=0.
	\end{equation*}
	Consequently,
	\begin{eqnarray}
		\left|\int_{0}^{\frac{i}{n}}Q_n \left(u,t\right) dx\right|= \left|\int_{0}^{t_{i_n}}Q_n \left(u,t\right) du +\int_{t_{i_n}}^{\frac{i}{n}}Q_n \left(u,t\right) du\right| \leq\int_{t_{i_n}}^{\frac{i}{n}} \left|Q_n \left(u,t\right)du\right|.   \notag
	\end{eqnarray}
	Further (see \eqref{eq2.7}), by using H\"older's inequality, we have
	\begin{eqnarray*}
		&& \sum_{i\in D_n}\left|\int_{0}^{\frac{i}{n}}Q_n \left(u, t\right)du \right| \leq	\sum_{i=1}^{i-1}\left|\int_{t_{i_n}}^{\frac{i}{n}}Q_n \left(u, t\right)du \right| \leq \int_{0}^{1}\left|Q_n \left(u,t\right)\right|du \\
		&& \qquad \leq \left(\int_{0}^{1}Q_{n}^{2} \left(u, t\right) du\right)^{\frac{1}{2}}=\left(\int_{0}^{1}\left(\sum_{k=1}^{n}g_k(u)\varphi_{k}(t)\right)^2 du\right)^{\frac{1}{2}}  \\
		&& \qquad = \left(\int_{0}^{1}\sum_{k=1}^{n}g_k^2(u) du \sum_{k=1}^{n}\varphi_{k}^2(t) \right)^{\frac{1}{2}} \leq \left(\sum_{k=1}^{n}\varphi_{k}^2(t) \right)^{\frac{1}{2}}.
	\end{eqnarray*}
	Then (see Lemma \ref{l1})
	\begin{eqnarray}
		\frac{1}{n} \sum_{i\in D_n}\left|\int_{0}^{\frac{i}{n}}Q_n \left(u, t\right)du \right| \leq O(1)\frac{1}{n}\left( \sum_{k=1}^{n} \varphi_k^2(t)\right)^{\frac{1}{2}} = O(1).\label{eq3.10}
	\end{eqnarray}
	
	At present we denote $F_n =\{1, 2, \dots n-1 \} \setminus D_n.$ Suppose that $i\in F_n.$ then according to definition of $D_n$  we have 
	\begin{eqnarray*}
		\left(f_n\left(\frac{i}{n}\right)-f_n\left(\frac{i+1}{n}\right)\right)\int_{0}^{\frac{i}{n}}Q_n(u,t)du =-\frac{1}{n} \left|\int_{0}^{\frac{i}{n}}Q_n(u,t) du\right|.
	\end{eqnarray*}
	According to last equality, we have
	\begin{eqnarray}
		\label{eq24} 
		\left|\sum_{i\in F_n} \left(f_n \left(\frac{i}{n}\right) -f_n \left(\frac{i+1}{n}\right)\right)\int_{0}^{\frac{i}{n}}  Q_n(u,t)du\right| = \frac{1}{n} \sum_{i\in F_n} \left| \int_{0}^{\frac{i}{n}} Q_n(u,t)du\right|.   
	\end{eqnarray}
	
	According to  \eqref{eq3.10} and \eqref{eq27} we obtain 
	\begin{eqnarray} \label{eq151}
		\left|S_1\right| &&= \left| \sum_{i=1}^{n-1}\left(f_n\left(\frac{i}{n}\right) -f_n\left(\frac{i+1}{n}\right)\right)\int_{0}^{\frac{i}{n}}Q_n(u,t)du \right|   \\ 
		&& \geq \left|\sum_{i\in F_n} \left(f_n\left(\frac{i}{n}\right) - f_n\left(\frac{i+1}{n}\right)\right) \int_{0}^{\frac{i}{n}}Q_n(u,t)du \right| \notag \\
		&& -  \left|\sum_{i\in D_n} \left(f_n\left(\frac{i}{n}\right) - f_n\left(\frac{i+1}{n}\right)\right) \int_{0}^{\frac{i}{n}}Q_n(u,t)du \right| \notag \\
		&& \geq  \frac{1}{n} \sum_{i\in F_n} \left|\int_{0}^{\frac{i}{n}}Q_n(u,t)du\right|- \frac{1}{n} \sum_{i\in D_n} \left|\int_{0}^{\frac{i}{n}}Q_n(u,t)du\right| \notag \\
		&& = \frac{1}{n} \sum_{i=1}^{n-1} \left|\int_{0}^{\frac{i}{n}}Q_n(u,t)dt\right|- \frac{2}{n} \sum_{i\in D_n} \left|\int_{0}^{\frac{i}{n}}Q_n(u,t)du\right|  \notag \\
		&&\geq M_n(t)- O (1).\notag
	\end{eqnarray}
	Finally, from \eqref{eq23} using \eqref{eq27}, \eqref{eq28} and \eqref{eq151},  we have
	\begin{eqnarray*}
		\left|\int_{0}^{1}f_n(u)Q_n(u,t)du\right| \geq M_n(t)-O(1).
	\end{eqnarray*}
	The condition of Theorem  \ref{t3} imply 
	\begin{eqnarray}	\label{eq29} 
		\limsup_{n \to \infty} \left|\int_{0}^{1}f_n(u)Q_n(u,t)du\right|=+\infty.
	\end{eqnarray}
	Consider the sequence of liner and bounded functionals on the Banach space $Lip1$
	\begin{eqnarray*}
		U_n(f) = \int_{0}^{1}f_n(u)Q_n(u,t)du.
	\end{eqnarray*}
	By \eqref{eq29}
	\begin{eqnarray*}
		\limsup_{n \to \infty}\left|U_n(f_n) \right|= + \infty.
	\end{eqnarray*}
	On the other hand                
	\begin{eqnarray*}
		\left|\left|f_n\right|\right|_{Lip1}=	\left|\left|f_n\right|\right|_{C} + \sup_{x,y\in \left[0,1\right]} \frac{\left|f_n(x)-f_n(y)\right|}{\left|x-y\right|} \leq 2. 
	\end{eqnarray*}
	Consequently (see \eqref{eq29}), according to the Banach-Steinhaus theorem, there exist such a function $h\in Lip1$ that
	\begin{eqnarray}	\label{eq30} 
		\limsup_{n \to \infty} \left|\int_{0}^{1}h(u)Q_n(u,t)du\right|=+\infty.
	\end{eqnarray}
	Let
	\begin{eqnarray*}
		m(u)=\int_{0}^{u}h(v)dv,
	\end{eqnarray*}
	using lemma \ref{l4} we get
	\begin{eqnarray*}
		S_n (t,m)= m(1) \int_0^1  B_n (u,t)\,du-\int_0^1  h(u) Q_n (u,t)\,du.
	\end{eqnarray*}
	From \eqref{eq3.4} and \eqref{eq30}  we get
	\begin{eqnarray*}
		\limsup_{n \to \infty}\left|S_n(t,m)\right|=+\infty.
	\end{eqnarray*}
	As $m' = h \in Lip1$ Theorem \ref{t3} is proved.
	
\end{proof}

Now we show that the condition of Theorem \ref{t2}  $( q,p \in E(\varphi), \ q(u)=1, \ p(u)=u, \ u\in [0,1])$ don't guarantee that
\begin{eqnarray*}
	\limsup_{n \to \infty}\left|S_n(t,f)\right|<\infty,
\end{eqnarray*}
for any function $f \in C_L.$

Indeed
\begin{theorem}
	\label{theorem4} 
	There exists a function $g \in C_L$ and ONS $(G_n)$  such that 
	\begin{eqnarray*}
		\int_{0}^{1} G_n (u) du =0, \ \ \ \   \int_{0}^{1} u \ G_n (u) du =0,  \ \ \ n=1, 2, \dots, 
	\end{eqnarray*}
	and
	\begin{eqnarray*}
		\limsup_{n \to \infty}\left|S_n(x,g,G)\right|= \limsup_{n \to \infty}\left|	\sum_{k=1}^{n} C_n(g,G) G_n(x)\right|=+ \infty, 
	\end{eqnarray*}
	where 
	\begin{eqnarray*}
		C_n(g,G)= \int_{0}^{1} g(u) G_n (u) du =0,  \ \ \ (n=1, 2, \dots)
	\end{eqnarray*}
	and
	\begin{eqnarray*}
		S_n(x,g,G) = 	\sum_{k=1}^{n} C_n(g,G) G_n(x).
	\end{eqnarray*}

\end{theorem}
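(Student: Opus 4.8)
The plan is to reduce Theorem~\ref{theorem4} to Theorem~\ref{t3}. First I would note that the two moment conditions are exactly what places $p$ and $q$ in $E(\varphi)$ \emph{trivially}: since $C_n(q)=\int_0^1 G_n(u)\,du=0$ and $C_n(p)=\int_0^1 u\,G_n(u)\,du=0$ for every $n$, the partial sums $S_n(x,q)$ and $S_n(x,p)$ are identically zero, so both functions satisfy \eqref{eq10} at every $x$. (The coefficients $C_k(g,G)$ themselves cannot all vanish, since the desired conclusion $\limsup_n|S_n(x,g,G)|=+\infty$ would then be impossible; the isolated ``$=0$'' attached to $C_n(g,G)$ in the statement is a misprint, and the summation index in the definition of $S_n(x,g,G)$ should read $k$.) Consequently the substance of the theorem is that the remaining hypothesis of Theorem~\ref{t2}, namely $M_n(x)=O(1)$, is indispensable. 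By Theorem~\ref{t3} it then suffices to build an ONS $(G_n)$ on $[0,1]$ that is orthogonal to both $1$ and $u$ and for which $\limsup_{n\to\infty} M_n(t)=+\infty$ at some point $t\in G$; Theorem~\ref{t3} will then produce the required $g\in C_L$ (the function $m$ it constructs satisfies $m'\in Lip1$, i.e.\ $m\in C_L$), and the proof is complete.

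So the whole difficulty is concentrated in the construction. For orthogonality to $1$ and $u$ I would build $(G_n)$ out of rescaled and translated copies of a single profile $\psi$ possessing two vanishing moments. The simplest admissible profile is the three-step function equal to $1,-2,1$ on the three equal thirds of its support; a direct computation gives $\int\psi=0$ and $\int u\,\psi(u)\,du=0$, so every normalized dilate and translate of $\psi$ is automatically orthogonal to $1$ and to $u$ globally. Any mutually orthonormal family assembled from such blocks is therefore an ONS with the two prescribed moment conditions built in, and we retain complete freedom in the number of blocks per generation, their supports, and their amplitudes.

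The heart of the matter, and the step I expect to be the main obstacle, is to arrange these vanishing-moment blocks so that $\limsup_n M_n(t)=+\infty$ at a point $t\in G$. Recall that $M_n(t)=\frac1n\sum_{i=1}^{n-1}\bigl|\int_0^{i/n} Q_n(u,t)\,du\bigr|$ with $Q_n(u,t)=\sum_{k=1}^n g_k(u)\varphi_k(t)$ and $g_k=\int_0^{\,\cdot}\varphi_k$. Disjointly supported blocks alone tend to yield Haar-type systems with bounded running integrals; instead one needs a Menchov-type resonance, in which over a carefully chosen block of indices the antiderivatives $g_k$ add up coherently at the fixed point $t$, so that a growing proportion of the partial integrals $\int_0^{i/n} Q_n(u,t)\,du$ are simultaneously large. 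The construction would proceed generation by generation: at each stage one inserts a packet of $\psi$-blocks, tuned so that the corresponding contribution to $M_n(t)$ along a specially chosen subsequence of $n$ exceeds the previous level by a fixed increment, while Lemma~\ref{l1} is invoked to keep $t$ inside $G$. Verifying that such a packet can always be added without disturbing orthonormality or the earlier generations is the delicate bookkeeping that constitutes the real work.

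Once $\limsup_n M_n(t)=+\infty$ has been secured at a point $t\in G$, Theorem~\ref{t3} immediately furnishes a function $g\in C_L$ with $\limsup_n|S_n(t,g)|=+\infty$, while the moment conditions give $p,q\in E(\varphi)$ as above. This exhibits the desired pair $(g,(G_n))$ and shows that the two conditions $p,q\in E(\varphi)$ do not by themselves guarantee boundedness of the partial sums of every $C_L$-function, completing the proof.
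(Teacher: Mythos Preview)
Your reduction to Theorem~\ref{t3} is logically sound: once the two moment conditions hold, $p,q\in E(\varphi)$ trivially, so if in addition $\limsup_n M_n(t)=+\infty$ at some $t\in G$, Theorem~\ref{t3} hands you the required $g\in C_L$. You also correctly diagnose the typos in the statement.

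The problem is that everything after the reduction is a sketch, not a proof. The entire content of Theorem~\ref{theorem4} is the \emph{existence} of the pair $(g,(G_n))$; by Theorems~\ref{t2} and~\ref{t3} this is equivalent to exhibiting an ONS with the two vanishing moments and $\limsup_n M_n(t)=+\infty$, and that construction is exactly what you leave undone. You write that one should insert packets of vanishing-moment blocks creating ``Menchov-type resonance'' so that $M_n(t)$ grows along a subsequence, and you acknowledge that ``verifying that such a packet can always be added \ldots\ is the delicate bookkeeping that constitutes the real work.'' That real work is never performed: no packet is specified, no lower bound for $M_n(t)$ is derived, and no argument shows the resonance survives the averaging by $1/n$ in \eqref{eq3}. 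As it stands, the proposal is a correct outline of what would suffice, but not a proof.

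The paper takes a completely different and much shorter route that avoids $M_n$ altogether. It starts from Banach's Theorem~A: pick the concrete smooth function $f(u)=1-\cos 4\pi(u-\tfrac12)$ and an ONS $(\varphi_n)$ for which $\limsup_n|S_n(x,f,\varphi)|=+\infty$ a.e. Then it applies an odd-reflection trick twice. First set $\Phi_n(u)=\varphi_n(2u)$ on $[0,\tfrac12)$ and $\Phi_n(u)=-\varphi_n(2u-1)$ on $[\tfrac12,1]$; this is again an ONS and satisfies $\int_0^1\Phi_n=0$. With $g(u)=f(2u)$ on $[0,\tfrac12)$ and $g=0$ on $[\tfrac12,1]$ one gets $C_n(g,\Phi)=\tfrac12 C_n(f,\varphi)$, so the divergence persists. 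Repeating the same reflection on $(\Phi_n)$ to obtain $(G_n)$ kills the first moment $\int_0^1 uG_n(u)\,du$ as well, and the correspondingly rescaled function $h$ still has $h'\in Lip1$ (this is where the specific choice of $f$, vanishing with its derivative at the endpoints, matters) and $\limsup_n|S_n(x,h,G)|=+\infty$. Thus the moment conditions come for free from symmetry, and the hard existence input is entirely outsourced to Banach's theorem rather than to a hand-built estimate on $M_n$.
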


\begin{proof}
	Let us assume that $f(x)=1-\cos 4(u- 1/2) \pi.$ According to the Banach Theorem there exist an ONS $(\varphi_n),$ such that a.e. on $[0,1]$
	\begin{eqnarray}
		\label{eq31} \limsup_{n \rightarrow +\infty} \left|S_n (x, f, \varphi)\right| =+\infty.
	\end{eqnarray}
	The system   $\varPhi_n(u)$  we define as follows
	
	\begin{equation*}
		\varPhi_n\left( u\right) =\left\{ 
		\begin{array}{ccc}
			\varphi_n(2u), & \text{when} & u\in \left[ 0,\frac{1}{2}\right), \\ 
			\\
			-\varphi_n\left(2\left(u-\frac{1}{2}\right)\right), & \text{when} & u\in \left[ \frac{1}{2},1\right].
		\end{array}%
		\right.
	\end{equation*}
	
	It is easy to prove that $\left(\varPhi_n\right)$ is an ONS on $[0,1]$ and $\int_{0}^1 \varPhi_n (u) du =0 , \ n=0,1, \dots .$
	
	Now we investigate the function
	\begin{equation*} \label{eq33}
		g\left( u\right) =\left\{ 
		\begin{array}{ccc}
			f(2u), & \text{when} & u\in \left[ 0,\frac{1}{2}\right), \\ 
			\\
			0, & \text{when} & u\in \left[ \frac{1}{2},1\right].
		\end{array}%
		\right.
	\end{equation*}
	We have
	\begin{eqnarray*}
		C_n(g,\varPhi)&=& \int_{0}^{1} g(u) \varPhi_n (u) du =\int_{0}^{\frac{1}{2}} f\left(2u\right) \varphi_n\left(2u\right)du = \frac{1}{2}\int_{0}^{1} f\left(u\right) \varphi_n\left(u\right)du = \frac{1}{2} C_n\left(f, \varphi\right).
	\end{eqnarray*}
	Consequently (see \eqref{eq31})
	\begin{eqnarray}
		\label{eq3.15} \limsup_{n \rightarrow +\infty} \left|S_n (x, f, \varPhi)\right| = \frac{1}{2} \limsup_{n \rightarrow +\infty} \left|S_n (x, f, \varphi)\right|=+\infty.
	\end{eqnarray}
	
	Now we define the next ONS $G_n(u)$ as follows
	
	\begin{equation} \label{eq34}
		G_n\left( u\right) =\left\{ 
		\begin{array}{ccc}
			\varPhi_{n}(2u), & \text{when} & u\in \left[ 0,\frac{1}{2}\right), \\ 
			\\
			- \varPhi_{n} \left(2\left(u-\frac{1}{2}\right)\right), & \text{when} & u\in \left[ \frac{1}{2},1\right].
		\end{array}%
		\right.
	\end{equation}
	After we define 	
	\begin{equation*} \label{eq33}
		h\left( u\right) =\left\{ 
		\begin{array}{ccc}
			g(2u), & \text{when} & u\in \left[ 0,\frac{1}{2}\right), \\ 
			\\
			0, & \text{when} & u\in \left[ \frac{1}{2},1\right].
		\end{array}%
		\right.
	\end{equation*}
	We have
	\begin{eqnarray*}
		C_n(h,G)&=& \int_{0}^{1} h(u) G_n (u) du =\int_{0}^{\frac{1}{2}} g\left(2u\right) \varPhi_n\left(2u\right)du\\
		&=& \frac{1}{2}\int_{0}^{1} g\left(u\right) \varPhi_n\left(u\right)du = \frac{1}{2} C_n\left(g, \varPhi\right).
	\end{eqnarray*}
	Such we have that $h'\in Lip1$ and  (see \eqref{eq31})
	\begin{eqnarray}
		\label{eq3.16} \limsup_{n \rightarrow +\infty} \left|S_n (x, h, G)\right| = \frac{1}{2} \limsup_{n \rightarrow +\infty} \left|S_n (x, g, \varPhi)\right|=+\infty.
	\end{eqnarray}
	Let $p(u)= u  ,$ we get
	\begin{eqnarray*}
		C_n(p,G)&=& \int_{0}^{1} u\ G_n (u) du =\int_{0}^{\frac{1}{2}}u\ \varPhi_n\left(2u\right)du - \frac{1}{2} \int_{0}^{1} u\ \varPhi_{n}\left(2\left(u-\frac{1}{2}\right)\right) du  \\
		&=&\frac{1}{4}\int_{0}^{1} u\ \varPhi_n\left(u\right)du - \frac{1}{2}\int_{0}^{1} \left({u + \frac{1}{2}}\right) \varPhi_n\left(u\right)du = -\frac{1}{4} C_n\left(q, \varPhi\right)=0.
	\end{eqnarray*}
	Finally we receive \\
	1) If $q(u)=1$ and $p(u)=u$ when $u \in [0,1],$ then
	\begin{eqnarray*}
		C_n\left(p,G\right) = C_n\left(q, G\right) = 0 \text{ or } 	\limsup_{n \rightarrow +\infty} \left|S_n (x, p, G)\right| = \frac{1}{2} \limsup_{n \rightarrow +\infty} \left|S_n (x, q, G)\right|<+\infty.
	\end{eqnarray*}	
	And (see \eqref{eq3.16})\\
	2) $\limsup\limits_{n \rightarrow +\infty} \left|S_n (x, h, G)\right| = +\infty \text{ where } h' \in Lip1. $
	
	Theorem \ref{theorem4} is completely proved.
	
\end{proof}

\section{Problems of efficiency} \label{sec4}
\begin{theorem}
	\label{t5}
	Let $\varphi_{n}(u) = \sqrt{2} \cos 2 \pi n u,$ then for any $x\in [0,1]$
	
	\begin{eqnarray*}
		M_n(x)= O (1).
	\end{eqnarray*}
\end{theorem}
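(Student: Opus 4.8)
The plan is to exploit the explicit trigonometric form of the system so that every quantity entering the definition \eqref{eq3} of $M_n(x)$ can be computed in closed form. First I would compute the integrated kernels $g_k$ of \eqref{eq1.2}: since $\varphi_k(u)=\sqrt2\,\cos 2\pi k u$, a direct integration gives
\[
g_k(u)=\frac{\sqrt2\,\sin 2\pi k u}{2\pi k},
\]
and substituting into \eqref{eq4} yields
\[
Q_n(u,x)=\frac1\pi\sum_{k=1}^n\frac{\sin 2\pi k u\,\cos 2\pi k x}{k}.
\]

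The key step is to integrate $Q_n(\cdot,x)$ once more over $[0,s]$ with $s=i/n$, as required inside \eqref{eq3}. Because $\int_0^s\sin 2\pi k u\,du=(1-\cos 2\pi k s)/(2\pi k)$, this second integration produces an \emph{extra} factor $1/k$, so that
\[
\int_0^{s}Q_n(u,x)\,du=\frac1{2\pi^2}\sum_{k=1}^n\frac{\cos 2\pi k x\,(1-\cos 2\pi k s)}{k^2}.
\]
This is the crux of the argument: the single integration built into the definition of $M_n$ converts the barely summable weights $1/k$ into the absolutely summable weights $1/k^2$.

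From here the estimate is immediate and, crucially, uniform in $i$, $x$ and $n$. Using $|\cos 2\pi k x|\le 1$ and $|1-\cos 2\pi k s|\le 2$, I would bound
\[
\left|\int_0^{i/n}Q_n(u,x)\,du\right|\le\frac1{\pi^2}\sum_{k=1}^n\frac1{k^2}\le\frac1{\pi^2}\cdot\frac{\pi^2}{6}=\frac16 .
\]
Averaging over $i=1,\dots,n-1$ as in \eqref{eq3} then gives $M_n(x)\le\frac{n-1}{6n}<\frac16$, whence $M_n(x)=O(1)$ for every $x\in[0,1]$, as claimed.

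I do not anticipate any genuine obstacle here: the whole proof hinges on the single observation that $M_n$ already contains one integration of $Q_n$, which supplies the decisive second power of $k$ in the denominator. The only points requiring care are the explicit antiderivative of $\sin 2\pi k u$ and checking that the resulting bound is independent of $i$, $x$ and $n$; in fact the normalizing factor $1/n$ in \eqref{eq3} is not even needed to keep $M_n$ bounded, since each term $\left|\int_0^{i/n}Q_n(u,x)\,du\right|$ is already $O(1)$.
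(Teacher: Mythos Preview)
Your proof is correct and follows essentially the same approach as the paper: compute $g_k$ explicitly, observe that the extra integration in the definition of $M_n$ produces the absolutely summable weights $1/k^2$, and bound uniformly in $i$, $x$, $n$. Your write-up is in fact more careful than the paper's, which arrives at the cruder bound $O(1)\bigl(\sum_{k=1}^n 1/k^2\bigr)^{1/2}$ rather than your sharper $\sum_{k=1}^n 1/k^2\le\pi^2/6$.
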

\begin{proof}
	Indeed
	\begin{eqnarray*}
		M_n(x) &=& \frac{1}{n} \sum_{i=1}^{n-1} \left|\int_{0}^\frac{i}{n} Q_n(u,x)du\right|=\frac{2}{n} \sum_{i=1}^{n-1} \left|\int_{0}^\frac{i}{n} \sum_{k=1}^{n} \int_{0}^{u} \cos 2 \pi k v dv du \cos 2 \pi k x \right| \\
		&=& O(1) \max_{1 \leq i \leq n} \left|\int_{0}^\frac{i}{n} \frac{1}{2 \pi} \sum_{k=1}^{n}  \frac{1}{k} \sin 2 \pi k u du \cos 2 \pi k x \right| = O(1) \left(\sum_{k=1}^{n} \frac{1}{k^2}\right)^{\frac{1}{2}} = O(1).
	\end{eqnarray*}
	
\end{proof}

\begin{theorem}
	\label{t6}
	Let $(X_n)$ be Haar system (see \cite{Alexits},Ch.2) then for any $x \in [0,1]$
	\begin{eqnarray*}
		M_n(x) = O(1).
	\end{eqnarray*}
	
\end{theorem}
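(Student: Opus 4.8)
The plan is to prove the much stronger, fully uniform estimate
$$
\left|\int_0^s Q_n(u,x)\,du\right| = O(1) \quad \text{uniformly in } s,x\in[0,1] \text{ and } n,
$$
from which the claim is immediate, since $M_n(x)=\frac1n\sum_{i=1}^{n-1}\bigl|\int_0^{i/n}Q_n(u,x)\,du\bigr|\le \frac{n-1}{n}\,O(1)$. Writing $G_k(s)=\int_0^s g_k(u)\,du$ with $g_k(u)=\int_0^u X_k(t)\,dt$ as in \eqref{eq1.2}, equation \eqref{eq4} gives $\int_0^s Q_n(u,x)\,du=\sum_{k=1}^n G_k(s)\,X_k(x)$, so the whole matter reduces to bounding this sum uniformly in $n$, $s$, and $x$.

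First I would record the explicit shape of $g_k$ and $G_k$. The normalized Haar function at scale $2^{-m}$ is supported on a dyadic interval $[a,b]$ of length $2^{-m}$ and equals $+2^{m/2}$ on its left half and $-2^{m/2}$ on its right half; hence $g_k$ is the nonnegative ``tent'' that rises linearly from $0$ to its peak $\frac12\,2^{-m/2}$ at the midpoint and falls back to $0$, vanishing off $[a,b]$. In particular $g_k(u)\ge 0$ everywhere, so $G_k$ is nondecreasing and
$$
0\le G_k(s)\le \int_0^1 g_k(u)\,du = \frac14\,2^{-3m/2}.
$$
The constant function $X_1\equiv 1$ is handled separately: $g_1(u)=u$, $G_1(s)=s^2/2\le\frac12$, and $X_1(x)=1$.

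The decisive observation is the localization of the Haar system: for fixed $x$, at each scale $2^{-m}$ at most one Haar function is nonzero at $x$, and for that one $|X_k(x)|=2^{m/2}$. Thus level $m$ contributes at most $|G_k(s)|\,|X_k(x)|\le \frac14\,2^{-3m/2}\cdot 2^{m/2}=\frac14\,2^{-m}$, and summing this geometric series together with the $X_1$ term yields
$$
\left|\sum_{k=1}^n G_k(s)\,X_k(x)\right|\le \frac12+\sum_{m\ge 0}\frac14\,2^{-m}=1,
$$
uniformly in $s,x,n$; note that truncating the sum at $k=n$ (a partial top level when $n$ is not a power of $2$) only discards nonnegative contributions and so cannot spoil the bound. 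This gives $M_n(x)=O(1)$. The one point requiring care --- the main, though minor, obstacle --- is the bookkeeping at dyadic rationals $x$, which may sit at an interval endpoint for infinitely many scales; there at most two Haar functions per level are nonzero at $x$, which at worst doubles the geometric series and still leaves a finite uniform bound, so the conclusion holds for every $x\in[0,1]$.
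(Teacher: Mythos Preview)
Your proof is correct and follows essentially the same route as the paper: group the Haar functions by dyadic level, use that at each level at most one (or two, at dyadic points) function is nonzero at $x$ with value $2^{m/2}$, combine this with the bound $G_k(s)\le \tfrac14\,2^{-3m/2}$ (the paper uses the slightly cruder $|g_m|\le 2^{-m/2}$), and sum the resulting geometric series. Your write-up is in fact more careful and explicit than the paper's sketch --- you spell out the tent shape of $g_k$, handle the constant function separately, and address dyadic endpoints --- but the underlying idea is the same.
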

\begin{proof}
	If $2^s<m\leq2^{s+1}$ then 
	\begin{eqnarray*}
		\left|\int_{0}^{u}X_m(v) dv\right| \leq 2^{-\frac{s}{2}}
	\end{eqnarray*}
	and $(i=1,2, \dots ,n-1)$
	\begin{eqnarray*}
		\left| \int_{0}^{\frac{i}{n}} \int_{0}^{u} \sum_{m=2^s +1}^{2^{s+1}} X_m(v)dvX_m(x)du\right| \leq 2 \ 2^{-s}.
	\end{eqnarray*}	
	Finally 
	\begin{eqnarray*}
		M_n(x) &=& \frac{1}{n} \sum_{i=1}^{n-1} \left|\int_{0}^\frac{i}{n} Q_n(u,x)du\right|  = O(1)  \max_{1 \leq i \leq n} \left|\int_{0}^\frac{i}{n} \sum_{s=0}^{d} \int_{0}^{u} \sum_{m=2^s +1}^{2^{s+1}} X_m(v)dvX_m(x)du\right| = O(1).
	\end{eqnarray*}

	From here it is evident that Theorem \ref{t6} is valid.
	
\end{proof}

\section{Conclusion}\label{sec13}

Based on the discussion presented in this article, it is evident that although Fourier partial sums related to general orthonormal systems do not converge for all functions $f$ classified under a differentiable class of $ C_L$ functions, we can identify a specific subset of orthonormal systems. These subsets include functions that meet particular criteria, ensuring that the Fourier partial sums for $ C_L$ class functions exhibit convergence (refer to Theorem \ref{t2}). Moreover, we have established that the criteria applied to the functions of the orthonormal systems are both precise and reliable.

Additionally, it is important to note that each orthonormal system encompasses a subsystem for which the general Fourier series of any function $ f$ within the $ C_L$ class converges almost everywhere on the interval $[0,1].$

\section{Manuscript processing} 

The \textbf{evaluation process} varies from journal to journal.

\begin{itemize}
	\item Single-blind review: the reviewers remain anonymous to the authors.
	\item Double-blind review: the reviewers do not know who the authors are, nor do the authors know who has evaluated their manuscript.
\end{itemize}
Typically, at least two independent experts are invited to review a manuscript’s content. The manuscript is then either accepted, rejected, or returned for revision based on their evaluation.

With many journals, you can propose reviewers who come from outside of your closest areas of academia. It is at the editors’ discretion whether to accept these proposals.

\textbf{Galley proofs}: Before your contribution is published, you will receive a proof of the article to proofread. At this point in the publication process, there must be no more changes made to the content: only minor corrections in form and phrasing are possible.

\begin{acknowledgement}
The authors would like to express their sincere gratitude to their colleagues and collaborators for their insightful discussions, constructive suggestions, and continuous support throughout the development of this work.

We would also like to express our appreciation in advance to the anonymous reviewers for their careful reading of the manuscript and for any thoughtful comments that may help improve the clarity and quality of the paper.

Furthermore, we extend our thanks to the editorial team and reviewers of the \emph{Georgian Mathematical Journal} for their commitment to maintaining high academic standards, which continues to support the advancement of research in the field.
\end{acknowledgement}

\begin{funding}
The research of third author is supported by Shota Rustaveli National Science Foundation grant no. FR-24-698;
\end{funding}

\begin{thebibliography}{20}
	
	\bibitem{Alexits}
	{\sc G. Alexits}, 
	{\it Convergence Problems of Orthogonal Series}, International Series of Monographsin Pure and Applied Mathematics, 1961  
	
	\bibitem{Banach}
	{\sc S. Banach,} 
	{\it Theorie des operations lineaires}, American Mathematical Society 1978, 259 pp. 
	
	\bibitem{Edvards} {\sc R.E. Edvards,} {\it Functional Analysis: Theory and Applications}, Dover Publisher, 783 pp.
	
	\bibitem{2}
	{\sc L. Gogoladze, V. Tsagareishvili}, 
	{\it Differentiable functions and general orthonormal systems.} Mosc. Math. J. \textbf{19}(4), 695--707 (2019)
	
	\bibitem{GogoladzeTsagareishvili} 
	{\sc L. Gogoladze and V. Tsagareishvili,} {\it Some classes of functions and Fourier coefficients with respect to general orthonormal systems}, English version: Proceedings of the Steklov Institute of Mathematics, 2013, 280, pp. 156-168
	
	\bibitem{GogoladzeTsagareishvili_5} 
	{\sc L. Gogoladze and V. Tsagareishvili,} 
	{\it Summability of general Fourier series}, Studia Scent.Math.Hungarica, 52(4), 2015, pp.511-536.
	
	\bibitem{GogoladzeTsagareishvili_6} {\sc L. Gogoladze and V. Tsagareishvili,} {\it Absolute convergence of multiple Fourier-Haar series}, Georgian Math. J., 2014, 21(1), pp. 69--74.
	
	\bibitem{KaczmarzSteinhaus} 
	{\sc S. Kaczmarz and H. Steinhaus,} 
	{\it Theorie der Orthogonalreihen}, Chelsea Publishing Company, 1951, 296 pp.
	
	\bibitem{KashinSaakyan}
	{\sc B.S. Kashin and A.A. Saakyan,} 
	{\it Orthogonal Series}, Translations of Mathematical Monographs, American Mathematical Society, 2005, 451 pp. 
	
	
	\bibitem{KolmogorovFomin} 
	{\sc A.H. Kolmogorov and C.B.Fomin,} 
	{\it Elements The Theory of Functions}, Martin Fine Books, 2012, 280 pp.
	
	\bibitem{PTT}
	{\sc LE. Persson, V. Tsagareishvili, G. Tutberidze} 
	{\it Properties of sequence of linear functionals on $BV$ with applications}, Nonlinear Studies, 30(4), (2023).
	
	\bibitem{PTWbook} 
	{\sc L. E. Persson, G. Tephnadze, F. Weisz,} 
	{\it Martingale Hardy Spaces and Summability of one-dimensional Vilenkin-Fourier Series}, Birkh\"auser/Springer, 2022. 
	
	\bibitem{Olevskii}
	{\sc A. M. Olevskii,} 
	{\it Orthogonal series in terms of complete systems}, Matematicheskii Sbornik. Novaya Seriya, 58(100), 1962, no.2, 707-748.
	
	\bibitem{Rademacher} 
	{\sc H. Rademacher,} 
	{\it Einige S\"atze \"uber Reihen von allgemeinen Orthogonalfunktionen}, Math. Ann. 87, (1922), 112-138.         
	
	\bibitem{Tandori} 
	{\sc K. Tandori}, 
	{\it Uber die orthogonalen Funktionen}. X. (Unbedingte Konvergenz.), Matematika, 9:6 (1965), 71-108; Acta Sci. Math. (Szeged), 23 (1962), 185-221
	
	\bibitem{Cagareishvili}
	{\sc V. Tsagareishvili},
	{\it On the boundedness of general partial sums }, Period Math Hung 88, 429-442 (2024). https://doi.org/10.1007/s10998-023-00565-y
	
	\bibitem{harpal} 
	{\sc H. Singh,} 
	{\it Some new mathematical and engineering results connected to structural problems}, PhD. thesis, UIT, The Arctic University of Norway, 2022.
	
	
	\bibitem{TsaTutCa}
	{\sc V. Tsagareishvili, G. Tutberidze, G. Cagareishvili},
	{\it Unconditional convergence of general Fourier series}, Publicationes Mathematicae Debrecen (to appear).
	
	\bibitem{tsatut3} 
	{\sc V. Tsagareishvili and G. Tutberidze,} 
	{\it Some problems of convergence of general Fourier series}, Izv. Math. Acad. Nauk Armenii Math. 57 (2022), no.6, 70-80.
	
	\bibitem{tut1} 
	{\sc G. Tutberidze,} 
	{\it A note on the strong convergence of partial sums with respect to Vilenkin system}, J. Contemp. Math. Anal., 54, 6, (2019), pp.319--324.
	
	\bibitem{tutberidze} {\sc G. Tutberidze},{ \it A study of bounded operators on martingale Hardy spaces},  PhD. thesis, UIT, The Arctic University of Norway, 2021.
	
	
	
	
\end{thebibliography}

\end{document}